\title[Integral Euler characteristic of $\Out \, F_{11}$]
{Integral Euler characteristic of $\Out \, F_{11}$}
\author{Shigeyuki Morita}
\address{Graduate School of Mathematical Sciences, 
The University of Tokyo, 
3-8-1 Komaba, 
Meguro-ku, Tokyo, 153-8914, Japan}
\email{morita@ms.u-tokyo.ac.jp}
\author{Takuya Sakasai}
\address{Graduate School of Mathematical Sciences, 
The University of Tokyo, 
3-8-1 Komaba, 
Meguro-ku, Tokyo, 153-8914, Japan}
\email{sakasai@ms.u-tokyo.ac.jp}
\author{Masaaki Suzuki}
\address{Department of Frontier Media Science, 
Meiji University, 
4-21-1 Nakano, Nakano-ku, Tokyo, 164-8525, Japan}
\email{macky@fms.meiji.ac.jp}
\subjclass[2000]{Primary~20F28 , Secondary~20J06; 20F65}
\keywords{Euler characteristic, outer automorphism group, free group}
\newtheorem{thm}{Theorem}[section]
\newtheorem{prop}[thm]{Proposition}
\theoremstyle{definition}
\newcommand{\hg}{\mathfrak{h}_{g,1}}
\newcommand{\hinf}{\mathfrak{h}_{\infty,1}}
\newcommand{\Ker}{\mathop{\mathrm{Ker}}\nolimits}
\newcommand{\Out}{\mathop{\mathrm{Out}}\nolimits}
\newcommand{\Q}{\mathbb{Q}}
\newcounter{fig}
\begin{document}

\maketitle

\begin{abstract}
We show that the integral Euler characteristic of the outer automorphism group of 
the free group of rank $11$ is $-1202$. 
\end{abstract}

\section{Introduction}\label{sec:intro}

This paper is a continuation of the previous paper \cite{mss3}, where we computed 
some parts of the Euler characteristics of three types of symplectic derivation Lie algebras. 

Let $\hg$ be the symplectic derivation Lie algebra of the free Lie algebra $\mathcal{L} (H)$ 
generated by the fundamental representation $H$ over $\mathbb{Q}$ of the symplectic 
group $\mathrm{Sp} (2g,\mathbb{Q})$. We may regard $H$ as a 
representation of the corresponding Lie algebra  $\mathfrak{sp} (2g,\mathbb{Q})$. 
Topologically, the vector space $H$ is the first rational homology group 
of a compact connected oriented surface of genus $g$ with one boundary component. 
The Lie algebra $\hinf$ obtained from $\hg$ by taking the direct limit with respect to $g$ 
is one of the three infinite dimensional Lie algebras considered 
by Kontsevich in \cite{kontsevich1, kontsevich2}. 
In these papers, he proved that, for $\hinf$ named the {\it Lie case}, 
the homology group of $\hinf$ is isomorphic to the free graded commutative algebra
generated by the stable homology group of $\mathfrak{sp}(2g,\Q)$ 
together with the totality of the cohomology groups of the
outer automorphism groups $\mathrm{Out}\,F_n$ of free groups $F_n$ of rank $n \ge 2$. 
This is done by a deep consideration on the relationship between 
the cell structure of the outer space given by Culler and Vogtmann \cite{cuv} 
and the chain complex which computes the graph homology associated with the Lie cyclic operad. 
The remaining two cases are named the {\it associative case} and the {\it commutative case}. 
Their homology groups are also related to other interesting geometrical objects such as 
cohomology groups of moduli spaces of Riemann surfaces and invariants of 
three dimensional manifolds etc.

Finding non-trivial rational (co)homology classes of $\mathrm{Out}\,F_n$ has been 
a difficult problem. 
A striking result by Galatius \cite{ga} shows that there are no rational {\it stable} reduced 
(co)homology classes. At present, only the three {\it unstable} classes, 
which are the first three of a series of classes introduced by the first author in \cite{morita99}, 
are shown to be non-trivial (see Conant and Vogtmann \cite{cov} and Gray \cite{gr}). 
Here we would like to mention that in a recent paper by Conant, Kassabov and 
Vogtmann \cite{ckv}, they constructed new homology classes of $\mathrm{Out}\,F_n$ 
which are related to the theory of elliptic modular forms, although their non-triviality is unknown. 
The situation being like this, it had been an important problem to prove the existence of non-trivial classes. 
In this context, our work in \cite{mss3} of determining some parts of the integral Euler characteristics 
showed that there exist at least one hundred new non-trivial classes and it also suggested that further 
computations should reveal the existence of many more classes. 

More specifically, the Lie algebra $\hg$ has a grading 
induced from that of the free Lie algebra $\mathcal{L}(H)=\oplus_{i=1}^\infty 
\mathcal{L}_i (H)$, 
so that we have a direct sum decomposition $\hg = \oplus_{k=0}^\infty \hg (k)$.  
Here $\hg (k)$ is the degree $k$ homogeneous part and 
$[\hg (k_1), \hg (k_2)] \subset \hg (k_1+k_2)$ holds for any $k_1$ and $k_2$. 
The explicit description of $\hg (k)$ is given by 
 \[ \hg (k)=\Ker \left( H \otimes \mathcal{L}_{k+1} (H) 
 \xrightarrow{[ \cdot, \cdot]} \mathcal{L}_{k+2} (H)\right).\]
Since the bracket operation $[\cdot, \cdot]$ on $\mathcal{L} (H)$ is equivariant with respect to 
the natural action of $\mathrm{Sp}(2g,\Q)$, the space $\hg (k)$ becomes 
an $\mathrm{Sp}(2g,\Q)$-module. 
It is known that the homology of the graded Lie algebra $\hg$ has another grading. 
That is, we have a direct sum decomposition  
\[H_\ast (\hg) = \bigoplus_{w=0}^\infty H_\ast (\hg)_w\]
with $H_\ast (\hg)_w$ obtained as the homology of the subcomplex generated by 
the chains in $\wedge^\ast \hg$ of total degree $w$ 
and called the {\it weight $w$ part} hereafter (see \cite[Section 2]{mss3}). 

Our main concern is the computation of $H_\ast (\hinf)_w$ after taking 
the direct limit with respect to $g$. 
At first glance, it may look too huge to handle. 
However, the following observation shows that it is not necessarily so. 
Let $\hg^+=\oplus_{k=1}^\infty \hg(k)$ be the ideal of the {\it positive} degree part. 
The spaces $H_\ast (\hg)_w$ and $H_\ast (\hg^+)_w$ are 
also $\mathrm{Sp}(2g,\Q)$-modules. 
As stated by Kontsevich \cite{kontsevich1} and proved in detail 
by Conant and Vogtmann \cite[Proposition 8]{cv0}  
(see also \cite[Section 2]{mss3}), we have 
\[H_\ast (\hg)_w=H_\ast (\hg^+)_w^{\mathrm{Sp}}\] 
for any $w \ge 1$. 
Here, for an $\mathrm{Sp}(2g,\Q)$-module $V$, we denote by $V^{\mathrm{Sp}}$ 
the invariant part for the $\mathrm{Sp}(2g,\Q)$-action. 
The general theory of $\mathrm{Sp}(2g,\Q)$-representations says that 
the invariant part $H_\ast (\hg^+)_w^{\mathrm{Sp}}$ as well as that of 
the corresponding chain complex stabilizes when 
$g$ becomes large. In particular, they are {\it finite} dimensional. 
Then Kontsevich's theorem says that the isomorphism 
\[PH_k (\hinf)_{2n} \cong H^{2n-k} (\Out \,F_{n+1};\Q)\]
holds for $n \ge 1$ and $k \ge 1$, 
where $PH_k (\hinf)_{2n}$ is the primitive part in $H_k (\hinf)_{2n}$ with respect to 
the commutative and co-commutative Hopf algebra structure (see \cite[Section 2]{cv0}). 


In our previous paper \cite{mss3}, 
we determined the dimensions of the chain complex $C_i (\hinf^+)^\mathrm{Sp}$ which computes 
the $\mathrm{Sp}$-invariant homology of the Lie algebra $\hinf^+$ 
up to weight $18$. From this, 
we obtained the value of the Euler characteristic 
\[\chi(H_*(\mathfrak{h}^+_{\infty,1})^{\mathrm{Sp}}_{w})=\sum_{i=1}^w (-1)^i 
\dim \left(C_i (\hinf^+)_w^\mathrm{Sp}\right)\] 
of each weight $w \le 18$ summand. The result is given as follows: 

\begin{thm}[\cite{mss3}]\label{thm:chi}
The Euler characteristics $\chi(H_*(\mathfrak{h}^+_{\infty,1})_w^{\mathrm{Sp}})$ 
of the $\mathrm{Sp}$-invariant homology groups of 
the Lie algebra $\hinf^+$
up to weight $w \le 18$ are given by the following table:

\begin{center}
\begin{tabular}{|c||c|c|c|c|c|c|c|c|c|}
\hline
$w$ & $2$ & $4$ & $6$ & $8$ & $10$ & $12$ & $14$ & $16$ & $18$\\
\hline
$\chi(H_*(\mathfrak{h}^+_{\infty,1})^{\mathrm{Sp}}_{w})$ & 
$1$ & $2$ & $4$ & $6$ & $10$ & $16$ & $23$ & $13$ & $-96$ \\
\hline
\end{tabular}
\end{center}
\end{thm}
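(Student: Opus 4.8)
The plan is to exploit the elementary but crucial fact that the Euler characteristic of a chain complex equals that of its homology, so that it suffices to compute the dimensions $\dim C_i(\hinf^+)_w^{\mathrm{Sp}}$ directly and to form the alternating sum displayed above for each even weight $w \le 18$. The Chevalley--Eilenberg chain group is $C_i(\hinf^+) = \wedge^i \hinf^+$, and since $\hinf^+ = \oplus_{k \ge 1} \hg(k)$ after passing to the limit $g \to \infty$, restricting to weight $w$ amounts to restricting to the span of wedges $\hg(k_1) \wedge \cdots \wedge \hg(k_i)$ with $k_1 + \cdots + k_i = w$ and every $k_j \ge 1$. Thus for each $w$ there are only finitely many admissible degree sequences, and the whole weight-$w$ subcomplex is finite dimensional. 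Note that an element of $\hg(k) \subset H \otimes \mathcal{L}_{k+1}(H)$ carries $k+2$ tensor factors of $H$, so a nonzero $\mathrm{Sp}$-invariant can occur only when the total number $\sum_j (k_j+2)$ of $H$-factors is even; this is why only even weights appear in the table.

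First I would organize the computation around the stable representation theory of $\mathrm{Sp}(2g,\Q)$. In the stable range the space of invariants in a tensor power $H^{\otimes 2m}$ is spanned, linearly independently, by the products of the symplectic form taken over the perfect matchings of the $2m$ factors. Following Kontsevich, this identifies the invariant chain group $C_i(\hinf^+)_w^{\mathrm{Sp}}$ with a space of graphs: each generator $\hg(k_j)$ contributes a vertex of valence $k_j+2$ carrying the cyclic Lie operad structure coming from the presentation $\hg(k)=\Ker(H \otimes \mathcal{L}_{k+1}(H) \xrightarrow{[\cdot,\cdot]} \mathcal{L}_{k+2}(H))$, while the symplectic pairings become the edges, subject to the signs produced by the exterior product and by the vertex orientations. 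Concretely, for each admissible degree sequence I would decompose the relevant $\hg(k)$ into $\mathrm{Sp}$-irreducibles using known plethysm and branching data, and then count the invariant multiplicities in the exterior products $\wedge^i(\oplus_j \hg(k_j))_w$ by symmetric-function and character manipulations.

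With these dimensions in hand, the remaining step is bookkeeping: assemble the $\dim C_i(\hinf^+)_w^{\mathrm{Sp}}$ into the chain complex for each $w$ and read off the alternating sum. This enumeration is large enough that I would carry it out by computer, and I would sanity-check the output in low weight against the known cohomology of $\Out F_r$; for instance the weight $2$ entry should record a single class realized by the theta graph, matching $H^0(\Out F_2;\Q)$ under the isomorphism $PH_k(\hinf)_{2n} \cong H^{2n-k}(\Out F_{n+1};\Q)$.

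The hard part will be the invariant-dimension computation itself. As $w$ increases, both the number of admissible degree sequences and the sizes of the individual $\mathrm{Sp}$-invariant spaces grow rapidly, and the delicate point is the interaction between the exterior-power antisymmetrization and the internal $\mathrm{Sp}$-module structure of each $\hg(k)$, together with the correct treatment of graph automorphisms and orientation signs. This is the technical heart of the argument and the reason the computation is pushed only as far as $w \le 18$; it is essentially a representation-theoretic calculation whose execution requires substantial computer assistance rather than a single closed-form argument.
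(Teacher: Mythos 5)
Your proposal is correct and follows essentially the same route as the paper's source \cite{mss3}: one computes the dimensions of the finite-dimensional $\mathrm{Sp}$-invariant chain groups $C_i(\hinf^+)_w^{\mathrm{Sp}}$ weight by weight via stable symplectic invariant theory (implemented with LiE and Mathematica), then uses the equality of the Euler characteristic of a complex with that of its homology to read off the alternating sums. Your parity observation explaining the vanishing in odd weights likewise matches the paper's remark that $C_i(\hinf^+)_w^{\mathrm{Sp}}$ is trivial for odd $w$.
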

\noindent
Note that $C_i (\hinf^+)_w^\mathrm{Sp}$ is trivial 
if $w$ is odd. 
By combining Theorem \ref{thm:chi} with the description of the
generators of the stable cohomologies due to Kontsevich, we obtain the following result:  

\begin{thm}[\cite{mss3}]\label{thm:chip}
The integral Euler characteristics 
\[e(\mathrm{Out}\, F_{n})=
\displaystyle\sum_{i=0}^{2n-3} (-1)^i \dim 
\left(H^i (\mathrm{Out}\, F_n; \mathbb{Q})\right)\] 
of $\mathrm{Out}\, F_n$ up to $n \le 10$ are given as follows: 

\begin{center}
\begin{tabular}{|c||c|c|c|c|c|c|c|c|c|}
\hline
$n$ & $2$ & $3$ & $4$ & $5$ & $6$ & $7$ & $8$ & $9$ & $10$\\
\hline
$e(\mathrm{Out}\, F_n)$ & 
$1$ & $1$ & $2$ & $1$ & $2$ & $1$ & $1$ & $-21$ & $-124$ \\
\hline
\end{tabular}
\end{center}
\end{thm}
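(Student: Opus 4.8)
The plan is to read off the individual Euler characteristics $e(\Out F_n)$ from the weight-graded Euler characteristics of Theorem~\ref{thm:chi} by means of the Hopf-algebra structure of $H_\ast(\hinf)$. Write $\chi_w$ for the Euler characteristic $\chi(H_\ast(\hinf^+)_w^{\mathrm{Sp}})$, so that by Theorem~\ref{thm:chi} the numbers $\chi_w$ are known for all even $w\le 18$ (while $\chi_w=0$ for odd $w$ and $\chi_0=1$). First I would use Kontsevich's theorem, in the form recalled in the introduction together with the identity $H_\ast(\hg)_w=H_\ast(\hg^+)_w^{\mathrm{Sp}}$ valid for $w\ge1$, to identify the generating system: in positive weight the invariant homology is a free graded-commutative algebra whose space of primitives in weight $2n$ is the cohomology $H^\ast(\Out F_{n+1};\Q)$, the weight-zero stable homology of $\mathfrak{sp}(2g,\Q)$ contributing nothing in positive weights.

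Next I would pass to Euler characteristics. The primitive part in weight $2n$ satisfies
\[\sum_k (-1)^k \dim PH_k(\hinf)_{2n}=\sum_i (-1)^{2n-i}\dim H^i(\Out F_{n+1};\Q)=e(\Out F_{n+1}),\]
since $(-1)^{2n-i}=(-1)^i$ turns the alternating sum into the definition of $e(\Out F_{n+1})$. Because a free graded-commutative algebra is a polynomial algebra on its even-degree and an exterior algebra on its odd-degree primitive generators, evaluating the bigraded Poincar\'e series (with $t$ marking homological degree and $s$ marking weight) at $t=-1$ collapses each weight-$2n$ family of generators into a single factor, and the two gradings separate into the product identity
\[\sum_{w\ge0}\chi_w\,s^{w}=\prod_{n\ge1}\bigl(1-s^{2n}\bigr)^{-e(\Out F_{n+1})}.\]
The left-hand side is completely known through $w=18$.

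Finally I would invert this identity. Taking logarithms and expanding $-\log(1-x)=\sum_{m\ge1}x^m/m$, the coefficient of $s^{2N}$ on the right equals $\tfrac1N\sum_{d\mid N} d\,e(\Out F_{d+1})$, whereas on the left it is the explicitly computable coefficient $c_N$ of $s^{2N}$ in $\log\bigl(\sum_w \chi_w s^w\bigr)$. Equating the two gives the recursion
\[e(\Out F_{N+1})=c_N-\frac1N\sum_{\substack{d\mid N\\ d<N}} d\,e(\Out F_{d+1}),\]
which I would run for $N=1,\dots,9$ to obtain $e(\Out F_2),\dots,e(\Out F_{10})$ and hence the stated table.

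The genuinely substantial ingredient is Theorem~\ref{thm:chi}, whose proof rests on the large-scale determination of $\dim C_i(\hinf^+)_w^{\mathrm{Sp}}$ carried out in \cite{mss3}; granting it, every step above is formal. The points that require care are purely organisational: matching the weight $2n$ to the rank $n+1$, verifying that the odd weights and the weight-zero $\mathfrak{sp}$-contribution do not interfere with the product formula, and keeping the signs consistent in the passage from the homology of $\hinf$ to the cohomology of $\Out F_{n+1}$. Once these are settled the recursion delivers the values mechanically.
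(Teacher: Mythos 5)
Your proposal is correct and follows essentially the same route as the paper: you re-derive the product identity of Proposition \ref{prop:ht} from Kontsevich's theorem and the free graded-commutative (Hopf-algebra) structure, and then solve for the exponents weight by weight from the data of Theorem \ref{thm:chi}. The only cosmetic difference is that you invert the product via logarithms and a divisor-sum recursion, whereas the paper expands the truncated product with the previously determined exponents and subtracts the contribution of ``lower terms'' from $\chi_w$ (exactly as done in the proof of Theorem \ref{mainthm} for weight $20$) --- the two inversions are identical in content.
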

\noindent
By Theorem \ref{thm:chip}, the existence of non-trivial {\it odd} dimensional rational
cohomology classes of $\mathrm{Out}\, F_{n}$ was shown for the first time. 

The purpose of the present paper is to extend our results to the next step, weight $20$, 
by which we determine the integral Euler characteristic of $\mathrm{Out}\, F_{11}$. 
The details of our explicit computation will be given in Section \ref{sec:main}. 
We will also compare our results with the result of 
Smillie and Vogtmann \cite{sv} on the rational (or orbifold) Euler characteristics of 
$\mathrm{Out}\,F_n$. 

For the computations of this paper, we basically used the same methods as 
in \cite[Section 4]{mss3}, namely we made intensive use of well known software LiE 
and Mathematica. 
Since our computations heavily depend on computers, the checking process for the accuracy 
is as important as the actual computational process. 
Our checking methods were also discussed in \cite[Section 4]{mss3}. 

Finally, we comment about 
related works on the other two cases of Kontsevich's theorem. 
As for the commutative case, Willwacher and \v{Z}ivkovi\'{c} \cite{wz} 
recently obtained the generating function of 
the (total) Euler characteristic and computed the explicit values up to weight $60$. 
Our former results in \cite{mss3} are consistent with theirs. 
For the associative case, we can apply Gorsky's formula \cite{go2} for 
the equivariant Euler characteristics of moduli spaces of 
Riemann surfaces with marked points. 
The formula enables the authors to compute the Euler characteristics 
up to weight $250$ \cite{mss5}, which coincide with our former computation 
up to weight $16$ in \cite{mss3}. 
These facts would support the accuracy of our computations in this paper 
since many parts of the data on various symplectic modules 
we used are in common with those for the commutative and associative cases. 
On the other hand, no result is known about the generating function for the Lie case 
which is similar to Gorsky's formula for the associative case. 
There seem to exist difficulties peculiar to this case, 
which add an additional meaning to our computational results.

{\it Acknowledgement} 
Some parts of the calculations of this paper were carried out 
on the TSUBAME 2.5 supercomputer in the Tokyo Institute of Technology. 
The authors are grateful 
to Professor Sadayoshi Kojima and Professor Mitsuhiko Takasawa 
who were very helpful for our computations in TSUBAME 2.5.

The authors were partially supported by KAKENHI 
(No.~24740040 and No.~24740035), 
Japan Society for the Promotion of Science, Japan.

\section{Main results}\label{sec:main}

We compute the dimension of the chain complex $C_i = C_i (\mathfrak{h}_{\infty,1}^+)^{\mathrm{Sp}}_{20}$ 
for $H_i (\mathfrak{h}_{\infty,1}^+)^{\mathrm{Sp}}_{20}$ explicitly. 
More precisely, we determine the dimension of the finite dimensional complex: 
\[
C_i = \bigoplus_{
\begin{subarray}{c}
i_1+\cdots + i_{20}=i\\ 
i_1+2 i_2+\cdots +20 i_{20}=20
\end{subarray}}
\left(
\wedge^{i_1} (\mathfrak{h}_{\infty,1}^+(1))\otimes 
\wedge^{i_2} (\mathfrak{h}_{\infty,1}^+(2))\otimes\cdots\otimes
\wedge^{i_{20}} (\mathfrak{h}_{\infty,1}^+(20)) \right)^{\mathrm{Sp}}
\]
where $\mathfrak{h}_{\infty,1}^+(k)$ is the degree $k$ part of $\mathfrak{h}_{\infty,1}^+$. 
The result is shown in Table \ref{tab:h}. 

\begin{table}[h]
\caption{The dimension of $C_i$} 
\begin{center}
\begin{tabular}{|c|r|}
\hline
& dimension \\
\hline
$C_1$ & $29729988$ \\
\hline
$C_2$ & $410769138$ \\
\hline
$C_3$ & $2864009351$ \\
\hline
$C_4$ & $13262053269$ \\
\hline
$C_5$ & $45353489325$ \\
\hline
$C_6$ & $120900142805$ \\
\hline
$C_7$ & $259222260499$ \\
\hline
$C_8$ & $455821729958$ \\
\hline
$C_9$ & $665350325867$ \\
\hline
$C_{10}$ & $811759271904$ \\
\hline
$C_{11}$ & $830129318093$ \\
\hline
$C_{12}$ & $711071098888$ \\
\hline
$C_{13}$ & $508080341074$ \\
\hline
$C_{14}$ & $300343387403$ \\
\hline
$C_{15}$ & $144874973588$ \\
\hline
$C_{16}$ & $55809757570$ \\
\hline
$C_{17}$ & $16607403485$ \\
\hline
$C_{18}$ & $3615255878$ \\
\hline
$C_{19}$ & $519201462$ \\
\hline
$C_{20}$ & $37584620$ \\
\hline
$\text{total}$ & $4946062104165$ \\
\hline
$\chi$ & $-1299$\\
\hline
\end{tabular}
\end{center}
\label{tab:h}
\end{table}

\begin{prop}[{\cite[Proposition 6.2]{mss3}}]
The weight generating function, denoted by $h(t)$, for the
$\mathrm{Sp}$-invariant stable homology group $H_*(\mathfrak{h}^+_{\infty,1})^{\mathrm{Sp}}$
is given by
$$
h(t)=\prod_{n=2}^\infty (1-t^{2n-2})^{- e(\mathrm{Out}\, F_n)}
$$
where $e(\mathrm{Out}\, F_n)$ denotes the {\it integral} Euler characteristic of $\mathrm{Out}\, F_n$.
\label{prop:ht}
\end{prop}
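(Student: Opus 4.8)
The plan is to prove the product formula for the weight generating function of the Sp-invariant stable homology group by identifying the graded structure of $H_*(\hinf^+)^{\mathrm{Sp}}$ as a free graded-commutative algebra and then computing its Poincar\'e series in the weight variable.

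First I would invoke Kontsevich's theorem as stated in the introduction: the homology $H_*(\hinf)$ is isomorphic to the free graded-commutative algebra generated by the primitive part, and the primitive part decomposes by weight with $PH_k(\hinf)_{2n} \cong H^{2n-k}(\Out\, F_{n+1};\Q)$ for $n \ge 1$. Since by the relation $H_*(\hg)_w = H_*(\hg^+)_w^{\mathrm{Sp}}$ the invariant homology we are tracking coincides with the full homology in positive weight, the generating algebra is the one built on these primitive classes. The key bookkeeping point is to reindex: the generators contributing to weight $w = 2n-2$ arise from the cohomology of $\Out\, F_n$ (shifting $n+1 \mapsto n$), so I would carefully check that a primitive class detecting $H^{2n-k}(\Out\, F_{n+1};\Q)$ sits in weight $2n$ and that after the reindexing the weight of the corresponding generator for $\Out\, F_n$ is $2n-2$.

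Next, since $H_*(\hinf)$ is a free graded-commutative algebra on graded generators, its Poincar\'e series factors as a product over the generators: each even-degree (in the homological sense) generator contributes a factor $(1-t^{w})^{-1}$ and each odd-degree generator a factor $(1-t^{w})^{+1}$, where $w$ is the weight of that generator. Collecting all primitive generators of a fixed weight $w = 2n-2$, the signed count of them, even minus odd, is precisely the alternating sum $\sum_i (-1)^i \dim H^i(\Out\, F_n;\Q) = e(\Out\, F_n)$, the integral Euler characteristic. Taking the product over all $n \ge 2$ then yields
\[
h(t) = \prod_{n=2}^\infty (1-t^{2n-2})^{-e(\Out\, F_n)},
\]
as claimed.

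The hard part will be tracking the parity of the homological degrees correctly so that the exponents in the product come out with the right sign: the free graded-commutative algebra on a graded vector space is symmetric on even-degree generators (giving polynomial, i.e.\ $(1-t^w)^{-1}$ factors) and exterior on odd-degree generators (giving $(1-t^w)^{+1}$ factors), so I must verify that the homological grading $i$ of a generator coming from $H^i(\Out\, F_n;\Q)$ controls this parity in such a way that the alternating sum over $i$ assembles into the exponent $-e(\Out\, F_n)$ with the correct overall sign. Since this statement is quoted from \cite[Proposition 6.2]{mss3}, the detailed verification of these sign and degree conventions is carried out there, and the present argument amounts to assembling Kontsevich's structural theorem with the standard Poincar\'e series of a free graded-commutative algebra.
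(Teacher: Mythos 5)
Your proposal is correct and follows essentially the same route as the proof this paper defers to, namely \cite[Proposition 6.2]{mss3}: Kontsevich's identification of $H_*(\hinf^+)^{\mathrm{Sp}}$ as a free graded-commutative algebra whose weight-$(2n-2)$ primitives correspond to $H^{2n-2-k}(\Out F_n;\Q)$, followed by the standard product formula, with the degree shift $k \mapsto 2n-2-k$ preserving parity so that the signed generator count in weight $2n-2$ is exactly $e(\Out F_n)$. One terminological caution: what you are factoring is not the Poincar\'e (dimension) series but the Euler-characteristic generating function --- the coefficient of $t^w$ in $h(t)$ is $\chi(H_*(\hinf^+)^{\mathrm{Sp}}_w)$, as the proof of Theorem \ref{mainthm} makes explicit --- and this reading is precisely what justifies your factor $(1-t^w)^{+1}$ for an odd-degree generator, where the dimension series would instead give $(1+t^w)$.
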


\begin{thm}\label{mainthm}
\[ e(\mathrm{Out}\, F_{11}) = -1202. \]

\end{thm}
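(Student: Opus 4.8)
The plan is to extract $e(\Out\, F_{11})$ from the Euler characteristic computed in Table~\ref{tab:h} together with the generating function identity of Proposition~\ref{prop:ht}. The key point is that the weight $w$ piece of the $\mathrm{Sp}$-invariant stable homology satisfies $\chi(H_*(\hinf^+)^{\mathrm{Sp}}_w)=\sum_i (-1)^i \dim C_i$, so the data in Table~\ref{tab:h} gives us directly
\[
\chi\bigl(H_*(\hinf^+)^{\mathrm{Sp}}_{20}\bigr)=-1299.
\]
First I would take the logarithm of the product formula in Proposition~\ref{prop:ht}, which converts the infinite product into a sum and makes the coefficient extraction tractable. Writing $h(t)=\sum_{w\ge 0}\chi(H_*(\hinf^+)^{\mathrm{Sp}}_w)\,t^w$, we have
\[
\log h(t)=-\sum_{n=2}^\infty e(\Out\, F_n)\,\log(1-t^{2n-2})
=\sum_{n=2}^\infty e(\Out\, F_n)\sum_{m=1}^\infty \frac{t^{(2n-2)m}}{m}.
\]

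The idea is that $e(\Out\, F_{11})$ first appears in weight $2\cdot 11-2=20$, and it enters the coefficient of $t^{20}$ in $h(t)$ linearly, with all other contributions to that coefficient coming from products involving the lower Euler characteristics $e(\Out\, F_n)$ for $2\le n\le 10$, which are already known from Theorem~\ref{thm:chip}. Concretely, I would expand the product $\prod_{n=2}^{11}(1-t^{2n-2})^{-e(\Out\, F_n)}$ as a power series up to order $t^{20}$ and isolate the coefficient of $t^{20}$. Since the $n=11$ factor contributes $-e(\Out\, F_{11})\cdot t^{20}$ at lowest nontrivial order (the term $m=1$ in the logarithmic expansion), and no higher power of the variable from this factor can land below $t^{40}$, the unknown $e(\Out\, F_{11})$ appears additively. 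Setting the coefficient of $t^{20}$ equal to the computed value $-1299$ then yields a single linear equation for $e(\Out\, F_{11})$.

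The remaining work is to compute the contribution to $[t^{20}]\,h(t)$ coming from the lower-weight factors $n=2,\dots,10$ using the values in Theorem~\ref{thm:chip}; subtracting this known quantity from $-1299$ isolates $-e(\Out\, F_{11})$. This is a finite and purely mechanical partition-type computation: one sums over all ways of writing $20$ as a combination of the available weights $2n-2$ for $n\le 10$ (that is, $2,4,6,\dots,18$), weighted by the appropriate multinomial factors arising from the exponents $-e(\Out\, F_n)$. The main obstacle, such as it is, is not conceptual but arithmetic bookkeeping: one must correctly account for the combinatorial coefficients in expanding the product to the precise order $t^{20}$, being careful that the weight-$18$ factor (from $n=10$) combines with the weight-$2$ factor, and so on. Carrying out this bookkeeping and solving the resulting linear equation gives $e(\Out\, F_{11})=-1202$, which is the assertion of Theorem~\ref{mainthm}.
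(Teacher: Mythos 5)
Your proposal is correct and is essentially the paper's own argument: the authors likewise expand the product $\prod_{n=2}^{10}(1-t^{2n-2})^{-e(\Out\, F_n)}$ of Proposition~\ref{prop:ht} to order $t^{20}$ using the known values from Theorem~\ref{thm:chip}, find that these lower-weight factors contribute $-97$ to the coefficient of $t^{20}$, and solve the resulting linear equation $e(\Out\, F_{11})=-1299-(-97)=-1202$. One small sign slip in your write-up, which does not affect the method or your final answer: since $(1-t^{20})^{-e(\Out\, F_{11})}=1+e(\Out\, F_{11})\,t^{20}+O(t^{40})$, the $n=11$ factor contributes $+e(\Out\, F_{11})$ (not $-e(\Out\, F_{11})$) to the coefficient of $t^{20}$, so the subtraction isolates $e(\Out\, F_{11})$ itself, in agreement with the value $-1202$ you obtain.
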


\begin{proof}
By Theorem \ref{thm:chi} and Table \ref{tab:h}, 
the weight generating function 
\[h(t)=\sum_{w=0}^\infty \chi(H_*(\mathfrak{h}^+_{\infty,1})^{\mathrm{Sp}}_w)t^w \]
is written as 
\[h(t)=1+t^2+2t^4+4t^6+6t^8+10t^{10}+16t^{12}+23t^{14}+13t^{16}-96t^{18}-1299t^{20}
+\cdots.\]
By using the same method in \cite{mss3}, 
we can determine the Euler characteristics of the primitive parts, namely $e(\mathrm{Out}\, F_{11})$. 
To be precise, the Euler characteristic of lower terms in the weight $20$ is $-97$. 
Then the Euler characteristics of the primitive parts is $-1299 - (-97) = -1202$. 
In other words, if we consider 
\begin{align*}
\bar{h}(t)=(1-t^2)^{-1}(1-t^4)^{-1}&(1-t^6)^{-2}(1-t^8)^{-1}(1-t^{10})^{-2}\\
&(1-t^{12})^{-1}(1-t^{14})^{-1}(1-t^{16})^{21}(1-t^{18})^{124}(1-t^{20})^{1202},
\end{align*}
then $\bar{h}(t)$ is congruent to $h(t)$ modulo $t^{21}$. 
By Proposition \ref{prop:ht}, we conclude that 
\[
 e(\mathrm{Out}\, F_{11})=-1202 . 
\]
\end{proof}

The fourth row of Table \ref{tab:hn} is the Euler characteristic of the primitive part 
which gives us the Euler characteristic of $\Out \, F_n$. 
\begin{table}[h]
\caption{$\text{Numbers of new generators for $H_*(\mathfrak{h}^+_{\infty,1})^{\mathrm{Sp}}_w$}$}
\begin{center}
\begin{tabular}{|c||r|r|r|r|r|r|r|r|r|r|}
\noalign{\hrule height0.8pt}
\hline
 $w$ & $2$ & $4$ & $6$ & $8$ & $10$ & $12$ & $14$ & $16$ & $18$ & $20$\\
\hline
$\chi$ & $1$ & $2$ & $4$ & $6$ & $10$ & $16$ & $23$ & $13$  & $-96$ & $-1299$\\
\hline
$\text{$\chi$ of lower terms}$ & $0$ & $1$ & $2$ & $5$ & $8$ & $15$ & $22$ & $34$  & $28$ & $-97$\\
\hline
$\text{$\chi$ of primitive part}$ & $1$ & $1$ & $2$ & $1$ & $2$ & $1$ & $1$ & $-21$  & $-124$ & $-1202$\\
\noalign{\hrule height0.8pt}
\end{tabular}
\end{center}
\label{tab:hn}
\end{table}

As a remarkable corollary, 
we see that there exist at least $1203$ {\it odd} dimensional non-trivial rational
cohomology classes of $\mathrm{Out}\,F_{11}$. 

By using this result, we can extend our former table in \cite{mss3} to Table \ref{tab:chie}, 
which compares the {\it rational} and the {\it integral} Euler characteristics of $\mathrm{Out}\,F_{n}$. 
The second row of Table \ref{tab:chie} is the rational Euler characteristics $\chi (\mathrm{Out}\,F_{n})$ 
of $\mathrm{Out}\,F_{n}$ given by Smillie and Vogtmann \cite{sv}, 
written up to the second decimal places here. 
The third row is the integral Euler characteristics $e(\mathrm{Out}\,F_{n})$ of $\mathrm{Out}\,F_{n}$ 
given by Theorems \ref{thm:chip} and \ref{mainthm}. 

Here we would like to mention the following two important open problems, which 
show a considerable difference between the Lie case and the other two cases. 
One is the asymptotic behavior of the rational Euler characteristics 
of $\mathrm{Out}\,F_{n}$. 
Smillie and Vogtmann \cite[Section 6]{sv} conjectured 
that the rational Euler characteristics of 
$\mathrm{Out}\,F_{n}$ are negative for all $n$, which holds for $n \le 100$ as mentioned in Vogtmann \cite{vogtmann}, 
and their absolute values grow exponentially with $n$. 
However, this conjecture is not settled yet. 
The other is the problem of determining whether the ratio of 
the rational Euler characteristics and the integral one tends to $1$ or not. 
In the case of the moduli spaces of Riemann surfaces, Harer and Zagier \cite{hz} 
proved that the ratio tends to $1$ asymptotically. 

\begin{table}[h]
\caption{$\chi$ versus $e$ for $\mathrm{Out}\,F_n$}
\begin{center}
\begin{tabular}{|c||c|c|c|c|c|c|c|c|c|c|}
\noalign{\hrule height0.8pt}
\hline
 $n$ & $2$ & $3$ & $4$ & $5$ & $6$ & $7$ & $8$ & $9$ & $10$ & $11$\\
\hline
$\chi$ & $-0.04$ & $-0.02$ & $-0.02$ & $-0.06$ & $-0.20$ & $-0.87$ & $-4.58$ & $-28.52$  & $-205.83$ & $-1690.70$\\
\hline
$e$ & $1$ & $1$ & $2$ & $1$ & $2$ & $1$ & $1$ & $-21$  & $-124$ & $-1202$\\
\noalign{\hrule height0.8pt}
\end{tabular}
\end{center}
\label{tab:chie}
\end{table}

\bibliographystyle{amsplain}

\end{document}